\documentclass[british, 12pt, reqno]{amsart}
\usepackage[T1]{fontenc}
\usepackage[latin9]{inputenc}
\usepackage{amsmath, amsthm, amssymb, stmaryrd}
\usepackage{mathrsfs}
\usepackage{mathtools}

\usepackage{amsmath}
\usepackage{amsthm}
\usepackage{amssymb}
\usepackage{xcolor}
\usepackage{dsfont}

\usepackage{cancel}
\usepackage{soul}
\usepackage{comment}
\usepackage{color}
\usepackage{amscd}
\usepackage{tabularx}
\usepackage{url}
\usepackage{eurosym}
\usepackage{braket}
\usepackage
{hyperref}
\usepackage{enumerate}
\usepackage{graphicx}

\makeatletter
\def\widebreve{\mathpalette\wide@breve}
\def\wide@breve#1#2{\sbox\z@{$#1#2$}%
     \mathop{\vbox{\m@th\ialign{##\crcr
\kern0.08em\brevefill#1{0.8\wd\z@}\crcr\noalign{\nointerlineskip}%
                    $\hss#1#2\hss$\crcr}}}\limits}
\def\brevefill#1#2{$\m@th\sbox\tw@{$#1($}%
  \hss\resizebox{#2}{\wd\tw@}{\rotatebox[origin=c]{90}{\upshape(}}\hss$}
\makeatletter

\makeatletter
\numberwithin{equation}{section}
\numberwithin{figure}{section}
\theoremstyle{plain}
\newtheorem{thm}{\protect\theoremname}[section]
\theoremstyle{plain}

\ifx\proof\undefined
\newenvironment{proof}[1][\protect\proofname]{\par
	\normalfont\topsep6\p@\@plus6\p@\relax
	\trivlist
	\itemindent\parindent
	\item[\hskip\labelsep\scshape #1]\ignorespaces
}{%
	\endtrivlist\@endpefalse
}
\providecommand{\proofname}{Proof}
\fi
\theoremstyle{remark}

\theoremstyle{plain}
\newtheorem{lem}[thm]{\protect\lemmaname}

\makeatother

\usepackage{babel}
\providecommand{\corollaryname}{Corollary}
\providecommand{\lemmaname}{Lemma}
\providecommand{\remarkname}{Remark}
\providecommand{\theoremname}{Theorem}

\numberwithin{equation}{section}
\numberwithin{figure}{section}
\theoremstyle{plain}

\def\1{{\textcolor{red} {1}}}
\def\d1{{\textcolor{red} {d-1}}}

\def \le {\leqslant}

\def \ge {\geqslant}

\def \d {{\mathrm{d}}}

\def \ds1 {\mathds{1}}

\def \alp {{\alpha}}

\def \lam {{\lambda}}

\makeatother

\usepackage{babel}
\providecommand{\lemmaname}{Lemma}

\providecommand{\theoremname}{Theorem}
\usepackage{xifthen,ifthen}
\makeatletter
\newcounter{@ToDo}
\newcommand{\todo@helper}[1]{%
	({\color{blue}TODO~\arabic{@ToDo}: {#1\@addpunct{.}}})%
}
\newcommand{\todo}[1]{\stepcounter{@ToDo}%
	\relax\ifmmode\text{\todo@helper{#1}}%
	\else\todo@helper{#1}\fi%
}
\newcounter{@cdo}
\newcommand{\cdo@helper}[1]{%
	({\color{red}CITE~\arabic{@cdo}: {#1\@addpunct{.}}})%
}
\newcommand{\cdo}[1]{\stepcounter{@cdo}%
	\relax\ifmmode\text{\cdo@helper{#1}}%
	\else\cdo@helper{#1}\fi%
}
\newcommand{\mmod}[1]{\,\,\mathrm{mod}\,\,#1}

\def \ds1 {\mathds{1}}

\def \alp {{\alpha}}

\def \epsilon {{\varepsilon}}

\def \lam {{\lambda}}

\begin{document}

\author{Sam Chow \and Owen Jones}

\address{Mathematics Institute, Zeeman Building, University of Warwick, Coventry CV4 7AL, United Kingdom}
\email{sam.chow@warwick.ac.uk}

\address{Mathematics Institute, Zeeman Building, University of Warwick, Coventry CV4 7AL, United Kingdom}
\email{Owen.S.Jones@warwick.ac.uk}

\title{On short averages of the Fibonacci partition function}

\subjclass[2020]{11B39 (primary); 05A16, 05A17 (secondary).}

\keywords{Fibonacci numbers, partitions, recurrences}

\begin{abstract}
We show that short averages of the Fibonacci partition function over $(H, H + H^\alp]$ can be significantly smaller than long averages over $(0,H]$, for any constant $\alp \in (0,1)$. This provides a negative answer to a question that we asked in our previous work on this topic (Journal of Number Theory, 2024).
\end{abstract}

\maketitle

\section{Introduction}

The Fibonacci partition function $R(n)$ has been investigated since the first volume of the Fibonacci quarterly in 1963, see \cite{HB1963}. For $n \ge 0$, let $R(n)$ count solutions to
\[
x_1 + \cdots + x_s = n,
\]
where $s \ge 0$ and $x_1 < x_2 < \ldots < x_s$ are Fibonacci numbers.
The values $R(n)$ form the sequence OEIS A000119. Carlitz \cite{Car1968} wrote one of the first articles on Fibonacci partitions, showing among other things that
\[
R(F_m) = \lfloor m/2 \rfloor \qquad (m \ge 2),
\]
where $F_1 = F_2 = 1$ and $F_{m+2} = F_{m+1} + F_m$ define the Fibonacci sequence.

We know from the existence of Zeckendorf expansions \cite{Zec1972} that 
\[
R(n) \ge 1
\]
for all integers $n \ge 0$. Indeed, this is easy to confirm using the greedy algorithm. This minimal value is in fact realised, for it is a simple matter to check that
\[
R(F_m - 1) = 1 \qquad (m \ge 2).
\]
As for large values, Stockmeyer \cite{Sto2008} showed that
$R(n) \le \sqrt{n+1}$ for all $n \ge 0$, with equality if and only if $n = F_m^2 - 1$ for some $m \ge 2$.

Put
\[
\varphi = \frac{1 + \sqrt 5}{2}, \qquad
\lam = \frac{\log 2}{\log \varphi} \approx 1.44.
\]
We observe that $R(n)$ behaves somewhat erratically (see Figure \ref{OriginalFigure}). 
\begin{figure}[!htb]
  \centering
\includegraphics[width=8cm]{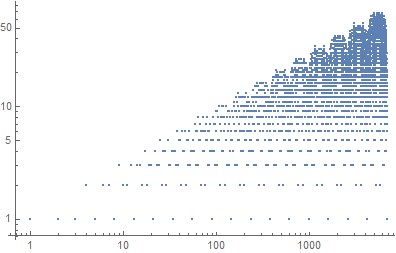}
  \caption{$R(n)$ against $n$ for $n=0,1,\ldots,6765$}
  \label{OriginalFigure}
\end{figure}

On the other hand, the summatory function behaves rather nicely, as seen in \cite{CS2021}.

\begin{thm}[Chow--Slattery, 2021]
\label{CS}
Define, for integers $H \ge 0$, 
    \[
    A(H) =\sum_{n=0}^H R(n).
    \]
    Then, for $H \ge 1$,  \[
    A(H) \asymp H^{\lambda}.
    \]
\end{thm}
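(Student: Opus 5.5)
Our approach is to compute $A$ exactly at the Fibonacci thresholds, where a recursion is available, and then use monotonicity to interpolate between consecutive thresholds.

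\textbf{Step 1: exact values at $H=F_m-1$.} Every Fibonacci partition of some $n<F_m$ uses distinct Fibonacci numbers. We regard $F_1=F_2=1$ as the single part $1$, so the parts are drawn from $\{F_2,F_3,\dots,F_{m-1}\}$. Conversely, every subset of $\{F_2,\dots,F_{m-1}\}$ has sum at most
\[
F_2+\cdots+F_{m-1}=F_{m+1}-2 .
\]
So the subsets counted by $A(F_m-1)$ are exactly the subsets of $\{F_2,\dots,F_{m-1}\}$ with sum less than $F_m$, and we need to count these.

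\textbf{Step 2: a recursion for the count.} Define, for $N\ge 0$ and $k\ge 2$,
\[
B_k(N)=\#\{S\subseteq\{F_2,\dots,F_k\}: \textstyle\sum S\le N\}.
\]
Splitting on whether $F_k\in S$ gives $B_k(N)=B_{k-1}(N)+B_{k-1}(N-F_k)$. By Step 1, $A(H)=B_{k}(H)$ for any $k$ with $F_{k+1}>H$. The cleanest quantity to track is $a_m=A(F_m-1)$. We split on the largest part $F_j$, with $j\le m-1$, and count the remaining subsets of $\{F_2,\dots,F_{j-1}\}$ with sum at most $F_m-1-F_j$. If $j=m-1$, the bound is $F_{m-2}-1$, which gives $a_{m-2}$. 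If $j\le m-2$, the bound $F_m-1-F_j$ is at least every possible sum, so this gives all $2^{j-2}$ subsets. Hence
\[
a_m=a_{m-2}+\sum_{j\le m-2}2^{j-2}+1 \asymp 2^{m},
\]
with an exact solution of the form $a_m=c\,2^m+O(1)$. Since $2^m=\varphi^{\lambda m}\asymp F_m^{\lambda}$, this yields $A(F_m-1)\asymp F_m^\lambda$.

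\textbf{Step 3: interpolation.} $A$ is nondecreasing because $R\ge 1$. Given $H\ge 1$, choose $m$ with $F_m\le H<F_{m+1}$. Then
\[
A(F_m-1)\le A(H)\le A(F_{m+1}-1).
\]
Both bounds are $\asymp 2^m\asymp H^\lambda$, because $F_{m+1}/F_m$ is bounded.

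The main obstacle is Step 2, specifically verifying the claim that for $j\le m-2$ the constraint is vacuous. This requires $F_2+\cdots+F_{j-1}=F_{j+1}-2\le F_m-1-F_j$, which follows from $F_{j+1}+F_j=F_{j+2}\le F_m$. We must also handle the convention for the part $1$ carefully, since $F_1=F_2$. Small cases $H<F_{m_0}$ are absorbed into the implied constants.
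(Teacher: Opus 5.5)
The paper does not prove this statement. It is quoted from Chow--Slattery and used only as an input (in \eqref{error} and in the final step of the proof of Theorem~\ref{MainThm}), so there is no in-paper argument to compare yours against. Judged on its own, your proof is correct and complete.

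Your count in Step~2 is right:
\begin{itemize}
\item the empty set contributes $1$;
\item the top part $F_{m-1}$ contributes exactly $a_{m-2}$, because a subset of $\{F_2,\dots,F_{m-2}\}$ with sum at most $F_{m-2}-1$ cannot contain $F_{m-2}$;
\item the top parts $F_j$ with $2\le j\le m-2$ contribute $\sum_{j=2}^{m-2}2^{j-2}=2^{m-3}-1$.
\end{itemize}
So your recursion collapses to $a_m=a_{m-2}+2^{m-3}$, which gives the closed form $a_m=\lceil 2^{m-1}/3\rceil$. For instance $a_5=A(4)=6$ and $a_7=A(12)=22$, as direct computation confirms.

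The exact recursion is more than you need. The two observations in your Step~1 already give two crude bounds. First, $A(F_m-1)\le 2^{m-2}$, since every partition of some $n<F_m$ is one of the $2^{m-2}$ subsets of $\{F_2,\dots,F_{m-1}\}$. Second, $A(F_{m+1}-2)\ge 2^{m-2}$, since each of these subsets is a partition of some $n\le F_{m+1}-2$. Monotonicity then gives, for $m\ge 3$ and $F_m\le H<F_{m+1}$,
\[
2^{m-3}\le A(F_m-2)\le A(H)\le A(F_{m+1}-1)\le 2^{m-1},
\]
which is already the theorem. What your recursion buys beyond this sandwich is the exact value of $A$ at the thresholds $F_m-1$, that is, the precise constant along that subsequence.
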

Thus $R(n)$ has, on average, order of magnitude roughly $n^{0.44}$, which is slightly below Stockmeyer's upper bound.
Using similar methods, the present authors established an analogous result for the second moment in \cite{CJ2024}.

\begin{thm}[Chow--Jones, 2024]
\label{CJ}
Define, for integers $H \ge 0$, 
    \[
    V(H) =\sum_{n=0}^H R(n)^2.
    \]
    Then, for $H\ge 1$, \[
    V(H) \asymp H^{\log \lambda_1/\log \varphi},
    \]
    where  $\lam_1 \approx 2.48$ is  the greatest root of the polynomial 
\[
x^3 - 2 x^2 - 2 x + 2.
\]
\end{thm}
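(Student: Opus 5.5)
Our plan is to prove the upper and lower bounds $V(H) \ll H^{\mu}$ and $V(H) \gg H^{\mu}$, where $\mu = \log\lam_1/\log\varphi$, by a recursive decomposition over Fibonacci intervals. The approach follows the proof of Theorem \ref{CS}. The starting point is the standard recursion for $R$ coming from the Zeckendorf structure. For $F_m \le n < F_{m+1}$, any Fibonacci partition of $n$ must use $F_m$ or $F_{m-1}$ as its largest part. This gives identities of the shape
\[
R(n) = R(n - F_m) + R'(n - F_{m-1}),
\]
where $R'$ counts partitions avoiding a prescribed top part. Iterating this, we express $R$ on $[F_m, F_{m+1})$ in terms of $R$ on the three earlier blocks $[0,F_{m-1})$, $[0, F_{m-2})$, $[0, F_{m-3})$, with shifts.

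\textbf{Second-moment recursion.} We will square the identity and sum over a full block. For a block of level $m$ we introduce three quantities:
\[
S_m = \sum_{n< F_m} R(n)^2, \qquad T_m = \sum_{n<F_{m}} R(n)R(n+F_{m-1}),
\]
together with a third sum $U_m$ recording the cross term $\sum R(n)R(n')$ over the aligned subblocks. The cross terms of the squared identity are precisely correlations of $R$ with a Fibonacci-shifted copy of itself. The aim is to show that $(S_m, T_m, U_m)$ satisfies a linear recurrence $\mathbf v_{m+1} = M \mathbf v_m$, up to lower-order boundary terms, where $M$ is a $3\times 3$ nonnegative integer matrix. Its characteristic polynomial should be $x^3 - 2x^2 - 2x + 2$.

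Once this is done, Perron--Frobenius applies, since $M$ will be primitive. It gives $S_m \asymp \lam_1^m \asymp F_m^{\mu}$, because $F_m \asymp \varphi^m$.

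\textbf{Passing to general $H$.} For $F_m \le H < F_{m+1}$, positivity and monotonicity give
\[
S_m \le V(H) \le S_{m+1} \asymp S_m.
\]
This yields $V(H) \asymp H^{\mu}$. The endpoint adjustments, such as $n = F_m - 1$, contribute only $O(1)$ or lower-order amounts, since $R(n)\le \sqrt{n+1}$ by Stockmeyer's bound.

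\textbf{Main obstacle.} The main difficulty is closing the system. The shifted correlations $T_m$ may themselves generate new correlation types, for example shifts by $F_{m-2}$. We must find a finite family of sums that is closed under the recursion and compute $M$ exactly.

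Our first attempt will use the symmetry of $R$ on $[F_m, F_{m+1})$ under reflection (Carlitz), $R(F_{m+1}-2-n)\approx R(n + \dots)$. We expect this to identify several correlation types with each other, so that only three independent quantities remain. If exact closure fails, a fallback is available. We would bound the extra correlations above and below by Cauchy--Schwarz, or by trivial comparison with the $S$'s. This produces upper and lower matrix inequalities $M_- \mathbf v_m \le \mathbf v_{m+1} \le M_+ \mathbf v_m$, and we would aim to choose them so that both $M_-$ and $M_+$ have spectral radius $\lam_1$. Securing that equal spectral radius is the delicate part of the argument.
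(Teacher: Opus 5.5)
The paper does not prove this statement; it quotes it from \cite{CJ2024}, so I am judging your proposal on its own terms. As it stands there is a genuine gap: everything that makes the theorem true sits inside the step you call the main obstacle. You never define $U_m$, never derive a recursion for $(S_m,T_m,U_m)$, and never compute $M$. The claim that its characteristic polynomial ``should be'' $x^3-2x^2-2x+2$ just restates the conclusion, so neither bound has been proved.

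The fallback does not rescue this. Replacing correlation sums by Cauchy--Schwarz or diagonal bounds changes the matrix, and nothing forces $M_-$ and $M_+$ to share the spectral radius $\lam_1$. Crude comparisons of this kind lose the exponent: Cauchy--Schwarz with Theorem~\ref{CS} gives only $V(H)\gg H^{2\lam-1}$, and $2\lam-1$ is strictly below $\log\lam_1/\log\varphi$.

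Part of the difficulty comes from your choice of sums. Working with the unrestricted $R$ truncated at $n<F_m$ creates boundary effects whenever a shifted argument crosses $F_m$. It even introduces signs: for $m\ge 4$ and $F_{m-3}\le t<F_{m-1}$ one has $R(F_m+t)=R(t)+R(F_{m-2}+t)-R(t-F_{m-3})$, so nonnegativity of your $M$ is not automatic either.

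The system does close, exactly and with no boundary terms, once you use restricted counts summed over all $n$. Let $P_k(n)$ be the number of subsets of $\{F_2,\dots,F_k\}$ with sum $n$. Put $c_k(h)=\sum_n P_k(n)P_k(n+h)$, the number of pairs of such subsets whose sums differ by $h$. Note that $c_k(-h)=c_k(h)$, and that $c_k(h)=0$ for $|h|>F_{k+2}-2$, because $F_2+\cdots+F_k=F_{k+2}-2$. Deciding whether $F_{k+1}$ lies in each subset gives
\[
c_{k+1}(h)=2c_k(h)+c_k(h+F_{k+1})+c_k(h-F_{k+1}).
\]
Set $u_k=c_k(0)$, $v_k=c_k(F_{k+1})$ and $w_k=c_k(F_k)$. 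The support bound then yields
\[
u_{k+1}=2u_k+2v_k,\qquad v_{k+1}=w_k,\qquad w_{k+1}=u_k+2v_k,\qquad (u_1,v_1,w_1)=(1,0,0).
\]
The matrix $\begin{pmatrix}2&2&0\\0&0&1\\1&2&0\end{pmatrix}$ has positive cube and characteristic polynomial $x^3-2x^2-2x+2$, so Perron--Frobenius gives $u_k\asymp\lam_1^k$. Since $P_k(n)=R(n)$ for $n<F_{k+1}$, $P_k\le R$, and $P_k(n)=0$ for $n>F_{k+2}-2$, we get $V(F_{k+1}-1)\le u_k\le V(F_{k+2})$. Your monotonicity step then finishes the proof.

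Your $T_m$ is a relative of $w_k$, so your instinct about which correlations matter was sound. The missing idea was the choice of restricted, untruncated sums, which makes the recursion exact and manifestly nonnegative. By contrast, inserting the trivial bound $c_k(h)\le c_k(0)$ into this recursion gives only $u_{k+1}\le 4u_k$, which shows how far crude comparisons are from $\lam_1$.
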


\noindent
This exponent is slightly larger than what is computed by combining Theorem~\ref{CS} with the Cauchy--Schwarz inequality.

Sanna then generalised Theorem~\ref{CJ} to all moments, using automata.

\begin{thm} [Sanna, 2025]
\label{Sanna}
For integers $k \ge 1$ and $H \ge 0$, define 
\[
M_k(H) = \sum_{n=0}^HR(n)^k.
\]
Then there exists $e_k > 0$ such that for $H \ge 1$,
\[
M_k(H) \asymp_k H^{(\log e_k)/\log \varphi}.
\]
Further,
\[
e_k^{1/k} \to \sqrt{\varphi}
\qquad (k \to \infty).
\]
\end{thm}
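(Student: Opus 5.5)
The approach is the standard transfer-matrix (automaton) method, applied to the $k$-th power of a Zeckendorf-digit recursion for $R$. We prove the existence of $e_k$ first and then its asymptotics.

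\textbf{Step 1: a linear representation of $R$.} Write $n$ in Zeckendorf form with digit string $w$. We would use the known description of $R$ via Zeckendorf digits: there are matrices $B_0,B_1$ of bounded size with nonnegative integer entries, and vectors $u,v$, such that
\[
R(n) = u^{\top} B_{w_1}\cdots B_{w_L} v .
\]
This follows from the classical recursions $R(F_m+r)=\dots$ for $0\le r<F_{m-1}$ (Klarner, Berstel), and is the same structure that underlies the proofs of Theorems~\ref{CS} and \ref{CJ}.

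\textbf{Step 2: the $k$-th moment and the constant $e_k$.} Taking $k$-fold tensor powers gives
\[
R(n)^k = (u^{\otimes k})^{\top} B_{w_1}^{\otimes k}\cdots B_{w_L}^{\otimes k}\, v^{\otimes k}.
\]
Summing over all admissible strings of length $L$ (no two adjacent $1$s) is governed by one nonnegative matrix $T_k$. This $T_k$ combines $B_0^{\otimes k}$ and $B_1^{\otimes k}$ with a two-state memory for the no-$11$ constraint. Hence
\[
M_k(F_L-1)=\sum_{n<F_L}R(n)^k = a_k^{\top}T_k^{L}b_k .
\]
We let $e_k$ be the Perron--Frobenius eigenvalue of $T_k$, restricted to the part reachable from $a_k$ and coreachable to $b_k$. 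We then need $a_k^\top T_k^L b_k\asymp_k e_k^L$, with no polynomial factor in $L$. For $k=1,2$ this is consistent with Theorems~\ref{CS} and \ref{CJ}. To get it in general, we would show that the relevant block is primitive and has a simple dominant eigenvalue. Alternatively, we would use the positivity $R\ge1$ together with submultiplicativity and supermultiplicativity of the block sums to exclude Jordan blocks.

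\textbf{Step 3: from Fibonacci endpoints to general $H$.} For $F_L\le H<F_{L+1}$, monotonicity gives
\[
M_k(F_L-1)\le M_k(H)\le M_k(F_{L+1}-1),
\]
and $e_k^{L}\asymp_k H^{\log e_k/\log\varphi}$. This yields the asymptotic $M_k(H)\asymp_k H^{(\log e_k)/\log\varphi}$.

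\textbf{Step 4: the limit $e_k^{1/k}\to\sqrt\varphi$.} Step 3 gives $e_k^{1/k}\asymp_k$-wise
\[
\Bigl(\max_{n<F_L}R(n)\Bigr)^{1/L}\le e_k^{1/k}\le \varphi^{1/k}\Bigl(\max_{n<F_L}R(n)\Bigr)^{1/L},
\]
up to factors whose $L$-th roots tend to $1$. The lower bound uses a single term of the sum, and the upper bound uses $M_k\le F_L\max R^k$. By Stockmeyer, $\max_{n<F_L}R(n)\approx\sqrt{F_L}$, attained at $n=F_m^2-1$ with $F_m^2\approx F_L$. So the $L$-th root tends to $\sqrt\varphi$. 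We then let $L\to\infty$ inside the asymptotic, taking care that the implied constants depend only on $k$. Finally $k\to\infty$ kills the factor $\varphi^{1/k}$, giving $e_k^{1/k}\to\sqrt\varphi$.

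\textbf{Main obstacle.} We expect the main obstacle to be Step 2. There we must show that the dominant eigenvalue of the tensor-power matrix contributes exactly $e_k^L$, with no factor of $L^j$ from a defective eigenvalue, uniformly in the choice of the reachable block. We would first try to prove primitivity of the relevant block directly from explicit positivity of $B_0^{\otimes k}$ and $B_1^{\otimes k}$ products.
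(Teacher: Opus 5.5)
The paper gives no proof of this theorem. It is quoted from Sanna, who used automata, so there is no internal argument to compare with, and your framework is in that spirit. Steps 3 and 4 are fine. Step 4 amounts to $\varphi^{k/2}\le e_k\le\varphi^{1+k/2}$, from Stockmeyer's bound and its equality case, and this gives the limit.

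The genuine gap is Step 2. It carries the whole content of the first assertion, and you flag it but leave it open. Put $s_L:=a_k^{\top}T_k^{L}b_k=M_k(F_L-1)$. For a nonnegative transfer matrix, Perron--Frobenius theory together with the monotonicity of $s_L$ only gives $s_L\asymp_k L^{j}\rho^{L}$ for some $j\ge0$. Here $j+1$ is the largest number of strongly connected components of spectral radius $\rho$ lying on a common path in the reachable--coreachable part. The theorem is precisely the claim $j=0$.

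Primitivity of `the relevant block' is not the right criterion, because that part need not be irreducible. What you must show is that no two components of spectral radius $e_k$ are chained.

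The sub/supermultiplicativity route is only asserted. Moreover, the upper bound $s_{a+b}\ll_k s_a s_b$ cannot be obtained termwise. Take $n=F_j+F_{j+2}+\cdots+F_{j+2a}$; then $R(n)=1+(a+1)\lfloor (j-2)/2\rfloor$. Its top and bottom Zeckendorf blocks represent $F_{2a+2}-1$ and $F_j$, and $R(F_{2a+2}-1)R(F_j)=\lfloor j/2\rfloor$. So that route would need a real averaging argument.

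One way to close the gap is to work with runs rather than digits. Let $n=F_{c_1}+\cdots+F_{c_r}$ be the Zeckendorf expansion, and put $d_i=c_i-c_{i+1}$ for $i<r$ and $d_r=c_r-1$. Record how far each $F_{c_i}$ is expanded as $F_{c_i-1}+F_{c_i-3}+\cdots+F_{c_i-2h+1}+F_{c_i-2h}$. This gives
\[
R(n)=(1,1)\,N(d_1)\cdots N(d_r)\,(1,0)^{\top},\qquad N(d)=\begin{pmatrix}1&1\\ \lfloor (d-1)/2\rfloor&\lfloor d/2\rfloor\end{pmatrix},
\]
which recovers $R(F_m)=\lfloor m/2\rfloor$ and $R(F_m-1)=1$. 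Set $u=(1,1)^{\otimes k}$, $v=(1,0)^{\otimes k}$, $G_k(x)=\sum_{d\ge2}N(d)^{\otimes k}x^d$ and $H_k(x)=\sum_{d\ge1}N(d)^{\otimes k}x^d$. Then $\sum_{n\ge1}R(n)^k x^{c_1(n)-1}=u^{\top}(I-G_k(x))^{-1}H_k(x)v$.

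Since $N(3)$ is the all-ones matrix, $G_k(x)$ is entrywise positive for $0<x<1$. Its Perron root increases strictly from $0$ to $\infty$ as $x$ runs over $(0,1)$, so it equals $1$ at a unique $x_0$. There it is a simple eigenvalue with positive derivative, so $(I-G_k(x))^{-1}$ has a simple pole at $x_0$.

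For $|x|=x_0$ with $x\ne x_0$, one has $|G_k(x)|\le G_k(x_0)$ entrywise. The inequality is strict in the entry $\sum_{d\ge2}x^d=x^2/(1-x)$, so irreducibility gives $\rho(G_k(x))<1$. Hence there is no other singularity on that circle.

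Therefore $s_L\sim C_k x_0^{-L}$ with $C_k>0$, since the Perron vectors are positive. That is, $e_k=1/x_0$ with no power of $L$. As a check, $G_1(1/2)=\begin{pmatrix}1/2&1/2\\ 1/3&2/3\end{pmatrix}$ is stochastic, recovering $e_1=2$.
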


The asymptotic periodicity phenomenon visible in Figure \ref{OriginalFigure} has also been explained recently, in work of Kempton \cite{Kem}. In other recent work, Coons--Kristensen--Laursen \cite{CKL2025} considered the function counting partitions into Fibonacci numbers that are not required to be distinct, establishing an asymptotic formula for its logarithm.

In \cite{CJ2024}, it was also asked whether the `short average' of $R$ behaves as one would expect from Theorem \ref{CS}, specifically, whether
\[
\sum_{X < n \le X + \sqrt X} R(n) \asymp X^{\lam - 1/2}.
\]
Our main result is a negative answer to this question.
\begin{thm}
\label{MainThm}
Let $\alp \in (0,1)$, and let $m \ge 3$ be an integer. Then
\begin{equation}\label{Result}
\sum_{F_m < n \le F_m+F_m^\alp} R(n) \asymp_\alp (\log F_m) F_m^{\alp \lam}.
\end{equation}
\end{thm}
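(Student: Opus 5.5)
The plan is to compare $R(F_m+r)$ with $R(r)$ for $1 \le r \le H := F_m^{\alp}$, and to show that on average over such $r$ it is about $m\,R(r)$. Since $m \asymp \log F_m$, Theorem~\ref{CS} then gives
\[
\sum_{r \le H} R(F_m+r) \asymp m\, A(H) \asymp (\log F_m) F_m^{\alp\lam}.
\]
Throughout, $m$ may be taken large in terms of $\alp$ (small $m$ is trivial), so that $H < F_{m-3}$, say. Let $k$ be the index with $F_{k-1} \le H < F_k$. Then $k \le \alp\, m + O(1)$, so $m-k \gg_\alp m$.

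\textbf{Structure of representations.} Take a representation of $n = F_m + r$ as a sum of distinct Fibonacci numbers with $r < F_{m-2}$. Its largest part must be $F_m$ or $F_{m-1}$, because $F_{m-2}+F_{m-3}+\cdots < F_m$. If it is $F_m$, the remaining parts give a representation of $r$. If it is $F_{m-1}$, the rest represents $F_{m-2}+r$, with parts $<F_{m-1}$. Iterating the same dichotomy gives an exact recursion:
\[
R_{<F_{j+1}}(F_j + r) = R_{<F_j}(r) + R_{<F_j}(F_{j-1}+r),
\]
where $R_{<F}(x)$ counts representations of $x$ using only parts smaller than $F$. This is valid as long as $r < F_{j-2}$, which forces the largest part to be $F_j$ or $F_{j-1}$. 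Unwinding it down to $j \approx k+2$ gives
\[
R(F_m+r) = \sum_{k+2 \le j \le m} R_{<F_{j-1}}(r) + E(r),
\]
where $E(r)$ is a bounded number of terms of the shape $R(F_{j}+r)$ with $F_j \asymp H$. (This uses the shift of parity: the parts used in the chain alternate, so strictly one gets the subsequence $j \equiv m \pmod 2$. That still leaves $\asymp m-k$ terms.)

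\textbf{Lower bound.} For $j-1 \ge k+1$ we have $F_{j-1} > r$, so $R_{<F_{j-1}}(r) = R(r)$. Hence
\[
R(F_m + r) \ge \tfrac12 (m-k-3)\, R(r),
\]
and summing over $r \le H$ with Theorem~\ref{CS} gives $\gg_\alp m H^{\lam}$.

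\textbf{Upper bound.} Each main term is at most $R(r)$, contributing $\ll m\,A(H) \ll m H^\lam$. For the error terms, $\sum_{r\le H} R(F_j + r) \le A(F_j + H) \ll H^\lam$, again by Theorem~\ref{CS}, since $F_j \ll H$. As there are $O(1)$ such terms, the total is $\ll m H^\lam$.

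\textbf{Main obstacle.} The step I expect to need the most care is making the unwinding exact. One must verify the dichotomy ``largest remaining part is $F_j$ or $F_{j-1}$'' at each stage, which needs $r < F_{j-2}$. One must also track correctly the constraint ``parts $< F_{j}$'' inherited from the chain. Finally, one must show that once $F_j$ becomes comparable to $H$, only $O(1)$ further branches occur before the bound $A(F_j+H)$ can be applied crudely. If this bookkeeping gets messy, I would first try the cruder argument, which suffices for the upper bound: every representation of $F_m + r$ is determined by the point $j$ at which the Fibonacci chain descending from $F_m$ stops, together with a representation of $r$ or of $F_{j'}+r$ with $F_{j'} \le 2H$.
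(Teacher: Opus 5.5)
Your proposal is correct and is essentially the paper's proof: you iterate $R(F_j+t)=R(t)+R(F_{j-2}+t)$ down to an index $k\equiv m \pmod 2$ with $F_k\asymp H$, obtaining $R(F_m+t)=\frac{m-k}{2}R(t)+R(F_k+t)$, then sum over $t$, using Theorem~\ref{CS} for $A(H)$ and the crude bound $A(F_k+H)\ll H^{\lam}$ for the leftover term. The only slip is your displayed recursion, which should read $R_{<F_{j+1}}(F_j+r)=R_{<F_j}(r)+R_{<F_{j-1}}(F_{j-2}+r)$, as your own text and parity remark already indicate; in this restricted form it holds for every $r\ge 0$, so the unwinding you flag as the main obstacle is immediate and leaves exactly one tail term.
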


It is easy to check that 
\[
\alp \lam < \lam - 1 + \alp.
\]
These sums starting at $F_m$ correspond to certain local minima in Figure~\ref{Sgraph}.

\begin{figure}[!htb] 
\centering
\includegraphics[width=8cm]{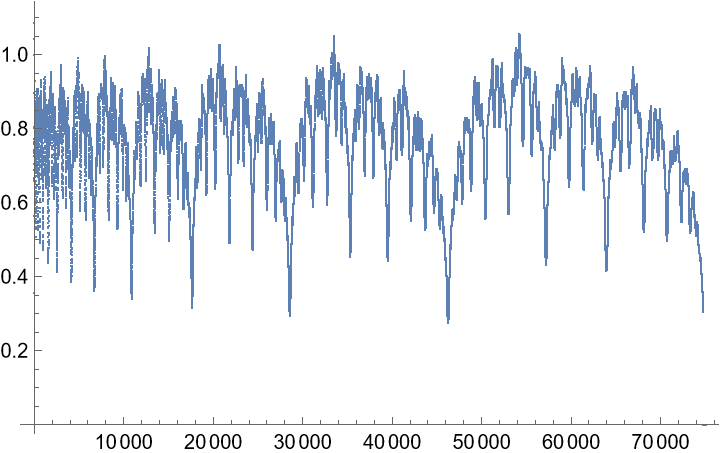}
\caption{$S(X)/X^{\lam-1/2}$ against $X$ for $1 \le X \le 74751$}
\label{Sgraph}
\end{figure}

\subsection*{Methods}

The key checkpoint is to find a nice expression for $R(F_m + t)$ involving $R(t)$. To achieve this, we begin with a combinatorial identity requiring an analysis of the underlying diophantine equation. We then iterate the identity to obtain our formula.

With this at hand, we sum over $t$ to compare
\[
\sum_{t=1}^H R(F_m + t) \quad
 \text{to}
 \quad
A(H),
\]
where $H = \lfloor F_m^\alp \rfloor$. Although these two sums have the same length, the former is philosophically a short average, whereas the latter is a long average that was understood in the work of Chow and Slattery \cite{CS2021}.

The comparison of short averages to long averages has played a substantial role in multiplicative number theory recently, following deep work of Matom\"aki and Radziwi\l \l~\cite{MR2016}. Our comparison principle is more specific and elementary, but the idea is likely applicable to other recurrence sequences.

\subsection*{Notation}

We adopt the Vinogradov and Bachmann--Landau notations, as we now describe. If $f$ and $g$ are complex-valued functions, then we write $f \ll g$ or $g \gg f$ or $f = O(g)$ if $|f| \le C|g|$ pointwise, for some constant $C>0$.

\subsection*{Funding} 

Sam Chow is supported by a grant of use under OpenAI's ChatGPT for Academic Researchers programme.
Owen Jones is supported by the Warwick Mathematics Institute Centre for Doctoral Training, and gratefully acknowledges funding from the Heilbronn Institute for Mathematical Research and the Engineering and Physical Sciences Research Council under the ``Additional Funding Programme for Mathematical Sciences'' (Grant number EP/V521917/1).


\subsection*{Acknowledgement and AI declaration}

We thank Gaurav Toshniwal for sharing an AI-generated version of the argument, and for allowing us free use of it. We took the main result and prompted GPT-6 Astra to produce its own derivation. We human-edited the latter into the proof here, and wrote the introduction ourselves.

\subsection*{Rights}

For the purpose of open access, the authors have applied a Creative Commons Attribution (CC-BY) licence to any Author Accepted Manuscript version arising from this submission.

\section{Proof of Theorem \ref{MainThm}}

The following identity can be deduced from \cite[Lemma 3.1]{CS2021} or \cite[Theorem 4]{Rob1996}, but it is more natural to show it directly.

\begin{lem} For integers $j \ge 5$ and $t \in [0, F_{j-3})$,
\[
R(F_j + t) = R(t) + R(F_{j-2} + t).
\]
\end{lem}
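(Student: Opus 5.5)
The plan is to classify the partitions of $n = F_j + t$ according to their largest part. Throughout, a partition of $n$ means a set of distinct Fibonacci values summing to $n$. Since $F_1 = F_2$, we index these values as $F_i$ with $i \ge 2$. Two facts will be used: the identity $\sum_{i=2}^{k} F_i = F_{k+2} - 2$, which follows by a one-line induction from $F_{k+2} = F_{k+1} + F_k$, and the bounds forced by $0 \le t < F_{j-3}$.

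\emph{Step 1: the largest part is at most $F_j$.} Since $n = F_j + t < F_j + F_{j-3} < F_j + F_{j-1} = F_{j+1}$, no part can be $F_{j+1}$ or larger.

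\emph{Step 2: partitions whose largest part is $F_j$.} Removing $F_j$ leaves a partition of $t$ into distinct Fibonacci values all less than $F_j$. Conversely, every partition of $t$ has all parts at most $t < F_{j-3} < F_j$. So we may adjoin $F_j$ to it. This gives a bijection, and these partitions number exactly $R(t)$.

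\emph{Step 3: partitions whose largest part is less than $F_j$.} Here every part lies in $\{F_2, \ldots, F_{j-1}\}$. If $F_{j-1}$ were not used, the total would be at most $\sum_{i=2}^{j-2} F_i = F_j - 2 < n$, which is impossible. Hence $F_{j-1}$ is a part. Removing it leaves a partition of $n - F_{j-1} = F_{j-2} + t$ into distinct Fibonacci values less than $F_{j-1}$.

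Conversely, any partition of $F_{j-2} + t$ has all parts at most $F_{j-2} + t < F_{j-2} + F_{j-3} = F_{j-1}$. So adjoining $F_{j-1}$ yields a valid partition of $n$ with largest part $F_{j-1}$. These partitions therefore number exactly $R(F_{j-2} + t)$.

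Adding the counts from Steps 2 and 3 gives $R(F_j + t) = R(t) + R(F_{j-2} + t)$.

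The only delicate point is the bookkeeping with the duplicated value $F_1 = F_2 = 1$. Indexing from $2$ handles this, and the hypothesis $j \ge 5$ ensures $F_{j-3} \ge F_2$, so all the indices used are legitimate. The key inequality, which is where the hypothesis $t < F_{j-3}$ is really used, is that the remainder after removing the top part is automatically below the next forbidden value.
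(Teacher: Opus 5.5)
Your proof is correct and takes essentially the same route as the paper. Like the paper, you use $F_j+t<F_{j+1}$ and $F_2+\cdots+F_{j-2}=F_j-2$ to force the largest part into $\{F_{j-1},F_j\}$, and then use $t<F_{j-3}$ to see that the remaining parts automatically lie below the removed part; you just spell out the converse direction (in particular $F_{j-2}+t<F_{j-1}$) a little more explicitly than the paper does.
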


\begin{proof}
We count the number of solutions to
\begin{equation}\label{Fjt}
\sum_{i=1}^s x_i= F_j+t
\end{equation}
where $x_1 <\cdots <x_s$ are Fibonacci numbers and $s \ge 1$.
By the identity
\[
F_2+\ldots+F_{m-2}=F_m-2  \qquad (m \ge 3)
\]
and the fact that 
\[
F_j+t < F_j+F_{j-3} < F_{j+1},
\]
we must have $x_s \in \{F_{j-1},F_j\}$. The solutions for which $x_s=F_{j}$ are those satisfying
\[
\sum_{i=1}^{s-1}x_i=t
\]
with $x_1<\dots<x_{s-1}<F_j$, of which there are $R(t)$. On the other hand, substituting $x_s=F_{j-1}$ into \eqref{Fjt} yields
\[
\sum_{i=1}^{s-1}x_i=F_{j-2}+t<F_{j-1},
\]
where $x_1<\dots<x_{s-1}<F_j$, so this case contributes $R(F_{j-2}+t)$ many solutions.
\end{proof}

Iterating this furnishes the following.

\begin{lem}
Let $j\ge 5$ be a positive integer and $\ell$ be an even integer with $0 \le \ell \le j-3$. Then for every integer $t \in [0,F_{j-\ell-1})$ we have
    \[
    R(F_j+t)=\frac{\ell}{2}R(t)+R(F_{j-\ell}+t).
    \]
\end{lem}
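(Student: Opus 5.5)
We argue by induction on $\ell$ over even values, with $j \ge 5$ fixed. Call the claim $P(\ell)$. The base case $\ell = 0$ reads $R(F_j+t) = R(F_j+t)$, so it is trivial.

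For the inductive step, let $\ell$ be even with $2 \le \ell \le j-3$, and assume $P(\ell-2)$. Take $t \in [0, F_{j-\ell-1})$. Since $F_{j-\ell-1} \le F_{j-\ell+1}$, the hypothesis $P(\ell-2)$ applies to $t$ and gives
\[
R(F_j+t) = \frac{\ell-2}{2}R(t) + R(F_{j-\ell+2}+t).
\]
It therefore suffices to show
\[
R(F_{j-\ell+2}+t) = R(t) + R(F_{j-\ell}+t).
\]
This is exactly the first Lemma with $j$ replaced by $j' = j-\ell+2$, since $j'-2 = j-\ell$.

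That Lemma requires two conditions, $j' \ge 5$ and $t \in [0, F_{j'-3}) = [0, F_{j-\ell-1})$. The range condition on $t$ is precisely our hypothesis, which explains why the statement uses the interval $[0,F_{j-\ell-1})$. The constraint $j' \ge 5$ amounts to $\ell \le j-3$, which is the stated upper bound on $\ell$. Substituting the identity into the display above gives $\frac{\ell}{2}R(t) + R(F_{j-\ell}+t)$, which is $P(\ell)$.

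There is no real obstacle here; the only thing to check is the index bookkeeping. We need that the $t$-range shrinks monotonically with $\ell$, so that the inductive hypothesis covers $t$. We also need that the last application, at $\ell = j-3$ (or $j-4$ if $j$ is odd), still has $j' \ge 5$, which holds since $j' = j-\ell+2 \ge 5$.
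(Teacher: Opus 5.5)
Your proof is correct and follows the paper's argument: induction on $\ell$ in steps of $2$, applying the first lemma at $F_{j-\ell+2}$ with the $t$-ranges nested (the paper's step from $\ell$ to $\ell+2$ under $\ell \le j-5$ is the same step reindexed). There is one harmless parity slip at the end: the largest even $\ell$ is $j-3$ when $j$ is odd and $j-4$ when $j$ is even, not the other way round.
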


\begin{proof}
We prove this by induction on $\ell$. For $\ell=0$ this is trivial. Suppose the statement holds for some $\ell\le j-5$. We wish to establish it with $\ell + 2$ in place of $\ell$.

Let $t \in [0,F_{j-\ell-3})$. Then, by the previous lemma, we have 
    \[
    R(F_{j-\ell}+t)=R(t)+R(F_{j-\ell-2}+t)
    \]
    and so
    \[
    R(F_j+t)=\frac{\ell+2}{2}R(t)+R(F_{j-\ell-2}+t).
    \]
This closes the induction and completes the proof.
\end{proof}

We can rewrite this as follows.

\begin{lem}
    For any integers $m \ge k \ge 3$ and $t \in [0,F_{k-1})$ with $m \equiv k \mmod 2$,
    \[
    R(F_m+t)=\frac{m-k}{2}R(t)+R(F_{k}+t).
    \]
    
\end{lem}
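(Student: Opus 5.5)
This is a direct reparametrisation of the previous lemma, with one edge case to treat separately. Given integers $m \ge k \ge 3$ with $m \equiv k \mmod 2$, I set $j = m$ and $\ell = m - k$. Then $\ell$ is an even integer with $\ell \ge 0$, and $j - \ell = k$. The range condition $t \in [0, F_{j-\ell-1})$ becomes exactly $t \in [0, F_{k-1})$, and the conclusion
\[
R(F_j+t)=\frac{\ell}{2}R(t)+R(F_{j-\ell}+t)
\]
becomes the desired identity. So the work lies only in checking the hypotheses $j \ge 5$ and $\ell \le j-3$.

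The second hypothesis reads $m-k \le m-3$, which is $k \ge 3$, so it holds. The first requires $m \ge 5$. Since $m \equiv k \mmod 2$ and $k \ge 3$, the only cases with $m < 5$ are $m = 3$ and $m = 4$. In each of these the parity condition together with $m \ge k \ge 3$ forces $k = m$.

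When $k = m$ the identity reads $R(F_m+t) = 0 \cdot R(t) + R(F_m+t)$, which is trivial. So I would dispose of $k = m$ (in particular $m \in \{3,4\}$) first, and then apply the previous lemma with $j = m \ge 5$ and $\ell = m-k$.

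I expect no real obstacle. The only point needing care is the boundary bookkeeping: confirming that the interval for $t$ matches exactly, and that the small cases excluded by $j \ge 5$ are exactly those where the statement is vacuous.
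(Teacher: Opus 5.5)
Your proposal is correct and matches the paper's argument: the paper presents this lemma as a rewriting of the preceding one, which is exactly your substitution $j = m$, $\ell = m-k$. Your verification that $\ell \le j-3$ reduces to $k \ge 3$, and that the cases $m \in \{3,4\}$ excluded by $j \ge 5$ force $k = m$ (where the identity is trivial), supplies the bookkeeping the paper leaves unstated.
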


We can now sum over $t$ in the above to discover, for integers $H$ with $1 \le H <F_{k-1}$, that
\begin{equation}
\label{FmkSums}
\sum_{t=1}^H R(F_m+t)= \frac{(m-k) (A(H)-1)}{2}+\sum_{t=1}^H R(F_{k}+t),
\end{equation}
where 
\begin{equation}
\label{error}
0 \le \sum_{t=1}^H R(F_{k}+t) \le A(F_{k}+H) \ll (F_{k}+H)^{\lam} \ll F_k^\lam.
\end{equation}

We next apply \eqref{FmkSums} to prove Theorem \ref{MainThm}. Recall that 
\[
F_j \asymp \varphi^j \qquad(j \ge 1),
\]
and fix $\alpha \in (0,1)$. It is enough to show \eqref{Result} for $m$ large. In this case, let $H=\lfloor F_m^\alpha \rfloor \asymp \varphi^{\alpha m} $ and $k$ be the smallest integer such that 
\[
3 \le k \le m, \qquad m \equiv k \mmod 2 \qquad \text{ and } \qquad F_{k-1}>H.
\] Note that such a $k$ exists since
$
F_{m-3} \asymp \varphi^{m-3}
$
exceeds $H$
as $m$ is large. Now
$
F_{k-3} \le H<F_{k-1},
$
so 
\[
\varphi^{\alp m} \asymp H \asymp F_k \asymp \varphi^k.
\]
Taking logarithms in base $\varphi$, we find 
\[
m\alpha = k+O(1)
\]
and hence $m-k \asymp m$. Combining this with \eqref{FmkSums} and \eqref{error} yields
\[
\sum_{t=1}^H R(F_m+t)\asymp  m H^{\lambda} \asymp (\log F_m) F_m^{\alp\lam}.
\]

\end{document}